\newcommand*\patchAmsMathEnvironmentForLineno[1]{%
  \expandafter\let\csname old#1\expandafter\endcsname\csname #1\endcsname
  \expandafter\let\csname oldend#1\expandafter\endcsname\csname end#1\endcsname
  \renewenvironment{#1}%
     {\linenomath\csname old#1\endcsname}%
     {\csname oldend#1\endcsname\endlinenomath}}%
\newcommand*\patchBothAmsMathEnvironmentsForLineno[1]{%
  \patchAmsMathEnvironmentForLineno{#1}%
  \patchAmsMathEnvironmentForLineno{#1*}}%
\theoremstyle{plain}
\newtheorem{theoremA}{Theorem}
\newtheorem{theorem}{Theorem}[section]
\newtheorem{lemma}[theorem]{Lemma}
\newtheorem{proposition}[theorem]{Proposition}
\theoremstyle{definition}
\newtheorem{remark}[theorem]{Remark}
\newtheorem{ex-constr}[theorem]{Construction}
\renewcommand{\dim}{\mathrm{dim}\,}
\renewcommand{\P}{{\mathbb P}}
\newcommand{\CH}{\mathrm{CH}}
\newcommand{\Pic}{\mathrm{Pic}}
\begin{document}

\author{Ivan Bazhov
}
\keywords{K3 surface, Chow groups, canonical zero-cycles, K-correspondence}
\address{Institut de Math\'ematiques de Jussieu, 4 Place Jussieu, 75005, France}
\email{ibazhov@gmail.com}
\title{On the decomposition of the small diagonal of a K3 surface}
\date{}
\subjclass[2010]{14C15, 14C05}

\begin{abstract}
We give a new proof of the theorem of Beauville and Voisin about the 
decomposition of the small diagonal of a K3 surface $S$. Our proof is explicit and works with the embedding of $S$ in $\P^g$. It is different from the one used by Beauville and Voisin, which employed the existence of one-parameters families of elliptic curves.
\end{abstract}


\maketitle{}

The canonical zero cycle on a K3 surface $S$ is defined in \cite{BV} as the rational equivalence class of any point lying on a rational curve $C\subset S$. The paper \cite{BV} shows that the intersection of any two divisors in $S$ is proportional to the canonical cycle in $\CH_0(S)$. It is also shown that the second Chern class $c_2(S)$ is proportional to this canonical zero cycle $o$. Both results can be obtained as consequences of the following theorem.
\begin{theoremA}(\cite[Proposition 4.2]{BV})
\label{th_BV}
Let $S$ be a K3 surface. In $\CH_2(S^3)_{\mathbb Q}$ there is a decomposition
\begin{multline}
\label{decomposition}
\Delta_{123}=\Delta_{12}\times o_3+\Delta_{23}\times o_1+\Delta_{13}\times o_2\\
-S\times o\times o-o\times S\times o-o\times o\times S,
\end{multline}
where $o$ is any point representing the canonical zero cycle, 
 $\Delta_{123}$ is a small diagonal in $S^3$, and the notation $\Delta_{ij}\times o_k$ stands for $\pi_{ij}^*(\Delta)\cdot \pi_k^*o$.
\end{theoremA}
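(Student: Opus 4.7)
The identity \eqref{decomposition} is equivalent to the vanishing, in $\CH_2(S^3)_{\mathbb Q}$, of the (Gross--Schoen) modified small diagonal
\begin{equation*}
\Gamma_3 := \Delta_{123} - \Delta_{12}\times o_3 - \Delta_{13}\times o_2 - \Delta_{23}\times o_1 + S\times o\times o + o\times S\times o + o\times o\times S,
\end{equation*}
so my goal becomes to prove $\Gamma_3 = 0$ directly, replacing the Beauville--Voisin argument via elliptic pencils by one built from the embedding $S\hookrightarrow \P^g$ attached to the polarization of degree $2g-2$.

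The central step would be to realize $\Gamma_3$ as the class of an explicit incidence cycle on $S^3$ built from the projective geometry of $S$. A natural candidate is a family of triples of points on $S$ that are either collinear in $\P^g$, or lie on a common member of a chosen family of curves on $S$ (for instance hyperplane sections, or rational curves in a fixed linear system). The canonical cycle $o$ then appears naturally as the class of any point on a rational curve in $S$, while the correction terms on the right-hand side of \eqref{decomposition} should arise from strata of the incidence cycle where two or three of the three points collide. The correspondence associated to the incidence (the ``K-correspondence'' of the keywords) has the property, by construction, that upon restriction to zero-cycles it factors through the line $\mathbb{Q}\cdot o \subset \CH_0(S)_{\mathbb Q}$. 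This allows one to compute the Chow class of the incidence cycle in two complementary ways: once geometrically, by identifying the boundary and diagonal contributions, and once abstractly, using the factorization through $o$; equating the two should yield \eqref{decomposition}.

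The main obstacle I anticipate is combinatorial and intersection-theoretic: one must verify that the contributions from the diagonal strata of the incidence cycle assemble precisely into the six correction terms in \eqref{decomposition}, with the right signs and coefficients and with no extraneous residual components. This will require a careful analysis along the small and large diagonals of $S^3$, combined with the rigidity of the canonical cycle $o$ already established in the Beauville--Voisin theorem on intersections of divisors.
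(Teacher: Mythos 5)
Your proposal correctly identifies the general shape of the argument the paper uses (an incidence cycle built from the projective embedding, with the corrections in \eqref{decomposition} arising from diagonal degenerations and the rigidity of $o$), but as written it is a plan whose two hard steps are left entirely open, and those two steps are the actual content of the proof. First, the choice of incidence cycle matters and your candidate does not work as stated: for $g\geq 3$ the surface $S\subset\P^g$ has codimension $g-2$, so a general line misses $S$ and ``collinear triples'' give nothing (that device is Voisin's for Calabi--Yau \emph{hypersurfaces}); triples on a common hyperplane section give a cycle of the wrong dimension. The paper instead takes codimension-two linear sections $P=\P^{g-2}$, which cut $S$ in a length-$(2g-2)$ scheme, and imposes the maximally degenerate splitting $P\cap S=(2g-3)p_1+p_2$; the locus $\Sigma\subset S\times S$ of such pairs is the correspondence, and the Mumford/K-correspondence identity $\pi_1^*\sigma_S+(2g-3)\pi_2^*\sigma_S=0$ is what forces $\dim\Sigma\leq 2$ and makes the factorization through $\mathbb{Q}\cdot o$ usable. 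Second, the step you describe as ``verify that the contributions assemble precisely, with no extraneous components'' conceals the genuinely difficult point: one must show that the correspondence is nondegenerate, i.e.\ that $\Sigma$ dominates both factors and that the resulting coefficient of $\Delta_{123}$ is nonzero. In the paper this occupies all of Section 2: one shows that the class $\Omega=E^*_{2g-3,1}(Gr)$ in $H^*(S\times S)$ cannot be supported on divisors of the form $D\times S$ and $S\times D'$, by computing its diagonal component in the Lehn--Sorger model of $H^*(S^{[2g-2]})$ and reducing to a combinatorial inequality ($G(g-1,g-1)>G(g-2,g)$) about counting pairs of permutations with prescribed lengths. Without an argument of this kind your plan cannot rule out that the incidence cycle degenerates (e.g.\ is supported over a curve in each factor), in which case the ``two complementary computations'' you propose would only yield $0=0$. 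Finally, even granting dominance, one still needs the positivity bookkeeping ($\alpha/\beta\geq 0$, $a,b>0$, so the coefficient $-(\alpha/\beta+a+b)$ is nonzero) together with the cohomological decomposition of the small diagonal and \cite[Proposition 2.6]{BV} to convert the remaining term $Z|_{S^3}$ into the six correction terms; none of this is sketched in your proposal.
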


The goal of this paper is to give another proof of Theorem \ref{th_BV} for a K3 surface $S$ with $\Pic(S)=\mathbb Z[L]$ with $L^2=2g-2$.
Our proof is very explicit using the embedding of $S$ in $\P^g$. It is based on the study of the set of pairs of points $(x,y)$ in $S\times S$ such that two curves in the linear system $|L|$ intersect exactly at these two points with given multiplicities. Specifically, we choose the multiplicities $2g-3$ and $1$. In other words, we are studying the surface $\Sigma$ parameterising complete intersections subschemes of $S$ consisting in the union of two points, one of them with multiplicity $2g-3$. We will prove that this is a surface and will establish two relations (\ref{eq_1}) and (\ref{eq_2}), from which we obtain the relation (\ref{decomposition}) up to some multiplicative factor $\mu$, which is non-zero if the surface $\Sigma\subset S\times S$ dominates factors. The second part of the paper is then devoted to the proof that $\mu\neq 0$.
In order to prove this non-vanishing we will interpret the surface $\Sigma$ in a slightly different way: 
as $\Pic(S)=\mathbb Z[L]$, the curves in $|L|$ are irreducible hence the intersection of any two different curves in
the linear system $|L|$ is a zero-dimensional subscheme of $S$ of length $\mathrm{deg}(L)$, so we have a morphism $Gr(2,H^0(S,L))\to S^{[2g-2]}$ and we let $Gr$ denote the image.
Using techniques from \cite{CM, L98, LS00, V07} to work with cohomology groups
of the Hilbert scheme $S^{[2g-2]}$, one can define the pieces $E_M^*(Gr)\in \CH(S^{m})$ of the decomposition of the class of $Gr$ in $\CH(S^{[2g-2]})$, where $M$ is a partition of $\{1,\ldots, 2g-2\}$ and $m=|M|$. 
The proof that $\mu\neq0$ involves the study of this class $E_M^*(Gr)$ in the case where $M$ is a partition into two integers. 

{\bf Acknowledgement.} I am grateful to my advisor Claire Voisin for her kind help and guidance during the work. I am also grateful to Lie Fu for useful comments and discussions.

\section{The Proof}
\subsection{Surface $\Sigma$}
Let us recall that a K3 surface with a very ample linear system $L$ of degree $2g-2\geq 4$ generating $\Pic(S)$ can be embedded in $\P^g$ and  the intersection of $S$ with any linear subspace $\P^{g-2}\subset \P^g$ is a zero-cycle of degree $2g-2$ on $S$.
The set of all $P=\P^{g-2}\subset\P^g$ is the Grassmann variety $Gr(g-1,g+1)$ and as already mentioned this provides a morphism 
$$
Gr(g-1,g+1)\to S^{[2g-2]},
$$
which maps $[P]$ to $P\cap S$ for $[P]\in Gr(g-1,g+1)$. We denote by $Gr$ the image of this map. We introduce the incidence scheme $\Xi\subset S\times S^{[2g-2]}$:
\begin{equation}
\label{diag}
\xymatrix{
\Xi\ar[d]^{\pi_1}\ar[r]^{\pi_2}& S^{[2g-2]}\\
S&
}
\end{equation}
Let us put $\mathcal L^{[2g-2]}=(\pi_2)_*\pi_1^*(\mathcal O_S(L))$.
The image $Gr$ is the locus in $S^{[2g-2]}$, where the rank of the map
$$
H^0(S,H)\otimes \mathcal O_{S^{[2g-2]}}\to \mathcal L^{[2g-2]}
$$
is $g-1$ and it follows by \cite{fulton} that its class is given by 
\begin{equation}
\label{class_of_Gr}
c_{g-2}(\mathcal L^{[2g-2]})c_g(\mathcal L^{[2g-2]}))-c_{g-1}^2(\mathcal L^{[2g-2]}).
\end{equation}


Let us also define a subset $G'_0\subset Gr(g-1,g+1)$ in the following way
$$
G'_0=\{[P]\in Gr(g-1,g+1): P\cap S=(2g-3)p_1+p_2\, \mbox{as a cycle}\}.
$$
We also define the variety $\Sigma_0\subset S\times S$ as
\begin{multline*}
\Sigma_0:=\left\{(p_1,p_2)\in S\times S: (2g-3)p_1+p_2=S\cap P\right.\\\left. \mbox{for\, some}\, [P]\in Gr(g-1,g+1)\right\}.
\end{multline*}

If we construct the following diagram
\begin{equation}
\label{diagram2}
\xymatrix{
G'_0\subset E_{2g-3,1}\ar@<2ex>[d]^{p}\ar[r]^{\phantom{LL}q}& S^{[2g-2]}\\
\Sigma_0\subset S\times S\phantom{ll}&
}
\end{equation}
where $E_{2g-3,1}$ represents schemes of the form $(2g-3)p_1+p_2$ on $S$, we then have $G'_0=\pi_2^{-1}(Gr)$ and $\Sigma_0=\pi_1(\pi_2^{-1}(Gr))$.
Let us prove the following facts about geometry of $\Sigma_0$.

\begin{lemma}
\label{lemmaaboutsigma}
The following holds
\begin{enumerate}
\item $\Sigma_0$ is a non-empty surface, possibly reducible, 
\item there is a component $\Sigma\subset \Sigma_0$ such that $\Sigma$ dominates both factors of $S\times S$.
\end{enumerate}
\end{lemma}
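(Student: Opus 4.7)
The plan is to establish both parts by a dimension count in the Hilbert scheme $S^{[2g-2]}$, supplemented by a direct incidence argument for non-emptiness.

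For the upper bound on $\dim \Sigma_0$, I would argue as follows. The morphism $\phi\colon Gr(g-1,g+1)\to S^{[2g-2]}$, $[P]\mapsto P\cap S$, is generically injective, since for a generic $P$ the subscheme $P\cap S$ consists of $2g-2 > g-1$ points in general position in $P$ and hence spans $P$ uniquely; so $\dim Gr = 2g-2$. The stratum $E_{2g-3,1}\subset S^{[2g-2]}$ of schemes whose underlying cycle is $(2g-3)p_1 + p_2$ has dimension $2 + (2g-4) + 2 = 2g$, where the middle $(2g-4)$ is the dimension of the punctual Hilbert scheme of length $2g-3$ of a smooth surface at a point (Brian\c{c}on's theorem). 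Since $\dim S^{[2g-2]} = 4g-4$, the expected dimension of $Gr\cap \overline{E_{2g-3,1}}$ is $(2g-2)+2g-(4g-4)=2$, and $\Sigma_0$ is the image of this intersection in $S\times S$ under $(2g-3)p_1 + p_2 \mapsto (p_1,p_2)$. This bounds $\dim \Sigma_0 \leq 2$.

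For non-emptiness, I would fix a generic $p_1\in S$ and count $P$'s through $p_1$ with $\mathrm{mult}_{p_1}(P\cdot S)\geq 2g-3$. The space of $P \ni p_1$ is the Grassmannian $Gr(2,g)$ of dimension $2g-4$, and the multiplicity condition cuts out a sequence of codimension-one Schubert-type conditions of total codimension $2g-4$, giving an expected $0$-dimensional solution set. I would verify positivity either by an explicit local computation (writing $S$ as a graph over $T_{p_1}S$ and parameterizing $P$ by a pair of linear forms $\ell_1,\ell_2$ modulo $GL_2$), or by reducing to the classical case $g=3$ (the non-zero count of flex tangent lines through a generic point of a smooth quartic K3) combined with a semicontinuity/deformation argument. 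A subtle point is that for $g\geq 4$ a curvilinear length-$(2g-3)$ scheme at $p_1$ spans a projective subspace of dimension $2g-4 > g-2$ and thus cannot lie in $P$, so the scheme structure at $p_1$ that realises the multiplicity must be non-curvilinear; one must work inside the non-curvilinear locus of the punctual Hilbert scheme.

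For part (b), given (a), the first projection $\pi_1\colon \Sigma_0 \to S$ is dominant, so there is an irreducible component $\Sigma\subset \Sigma_0$ of dimension $2$ dominating $S$ via $\pi_1$. To show that $\Sigma$ also dominates via $\pi_2$, I would argue by contradiction: if $\pi_2(\Sigma)\subsetneq S$ were a curve, the fibre of $\pi_2|_{\Sigma}$ over a generic point of the image would have dimension $\geq 1$; but by an analogous local dimension count at $p_2$—the space of $P \ni p_2$ with $\mathrm{mult}_{p_1}\geq 2g-3$ at some other point $p_1\in S$ is generically $0$-dimensional in the $(2g-4)$-dimensional Grassmannian of $P \ni p_2$—this is impossible. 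Hence $\Sigma$ dominates both factors. The main obstacle in the whole plan is establishing positivity of the Schubert/Chern-class count in the non-emptiness step, which is the real content of the argument and which is the hardest to make rigorous for general $g$.
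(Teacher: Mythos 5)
Your plan has two genuine gaps, and at both places the paper proceeds quite differently.

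First, the upper bound $\dim\Sigma_0\leq 2$. An expected-dimension count inside $S^{[2g-2]}$ only bounds the dimension of the non-empty components of $Gr\cap\overline{E_{2g-3,1}}$ from \emph{below}; it gives no upper bound, since the intersection may be excessive. (The paper is explicitly aware of this: it allows $G'_0=q^{-1}(Gr)$ to have too large a dimension and therefore works with the cycle class $\Omega=p_*q^*(Gr)$ rather than with the effective cycle $[\Sigma_0]$.) The actual upper bound comes from Mumford's theorem: for $(p_1,p_2)\in\Sigma_0$ one has $(2g-3)p_1+p_2=L^2$ in $\CH_0(S)$, hence the holomorphic $2$-form $\pi_1^*\sigma_S+(2g-3)\pi_2^*\sigma_S$ vanishes on $\Sigma_0$; since this form is symplectic on $S\times S$, every component of $\Sigma_0$ is isotropic and therefore of dimension at most $2$.

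Second, non-emptiness and dominance. You correctly identify the positivity of your Schubert-type count as ``the real content,'' but you do not prove it, and the curvilinear/non-curvilinear issue you raise is a genuine obstruction to carrying out the local computation; so as it stands parts (a) and (b) are not established. The paper avoids any local enumerative computation: it shows that the class $\Omega=E^*_{2g-3,1}(Gr)\in H^4(S\times S)$, computed via the Lehn--Sorger description of $H^*(S^{[2g-2]})$ together with a combinatorial count of permutations (Proposition \ref{thelemma}), contains the class of the diagonal with non-zero coefficient; hence $\Omega$ cannot be supported on a union of divisors of the form $D\times S$ and $S\times D'$, and its support, which is $\Sigma_0$, must have a component dominating the first factor. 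Finally, for part (b) the paper gets the second dominance for free from the same $2$-form identity: on an irreducible component $\Sigma$, the projection $\pi_1|_{\Sigma}$ is dominant if and only if $\pi_1^*\sigma_S$ is generically non-zero on $\Sigma$, which by the identity is equivalent to $\pi_2^*\sigma_S$ being generically non-zero, i.e.\ to $\pi_2|_{\Sigma}$ being dominant. Your proposed second local count at $p_2$ is thus both unproven and unnecessary.
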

\begin{proof}
Let us first proof that $\dim \Sigma_0\leq 2$. Indeed, from the equality
$$
p_1+(2g-3)p_2=L^2,
$$
valid in $\CH^2(S)$ for any pair $(p_1,p_2)\in\Sigma_0$, and the theorem of Mumford \cite{M}, it follows that 
\begin{equation}
\label{Kcc}
\pi_1^*\sigma_S+(2g-3)\pi_2^*\sigma_S=0
\end{equation}
on $\Sigma_0$, where $\sigma_S$ is a non-vanishing 2-form on $S$. Therefore $\dim \Sigma_0\leq2$.

Notice that equation (\ref{Kcc}) characterises K-correspondence in the terminology of \cite{V03}. This equation implies that for any irreducible component $\Sigma$ of $\Sigma_0$ the morphism $\pi_1|_{\Sigma}$ is dominant if and only if the morphism $\pi_2|_{\Sigma}$ is dominant. Indeed, these conditions are respectively equivalent to the generic non-vanishing of $\pi_i^*\sigma_S$.
This argument also shows that (a) and (b) are implied by the fact that the first projection $\pi_1|_{\Sigma_0}$ is dominant. 
In order to prove this last statement we observe that the cycle $\Omega=p_*q^*(Gr)$ has for support the surface $\Sigma_0$ although this cycle could be non-effective due to the fact that even if $\Sigma_0$ has the right dimension the scheme $G'_0=q^{-1}(Gr)$ could be of a higher dimension leading to excess  formulas in the computation of the cycle $\Omega$.
Nevertheless we can argue that if $\Omega$ can not be represented by a cycle supported on the union of divisors of the form $D\times S$, 
then one component of the support $\mathrm{supp}\,\Omega$ has to dominate $S$ by the first projection, that is, one component of $\Sigma_0$ dominates $S$ by the first projection.
The next section is devoted to the proof that the class $\Omega$ can not be supported on the union of $D\times S$, 
see Proposition \ref{thelemma}. 
\end{proof}

\begin{remark}
Let us note that we expect $\dim G'_0=2$, $\Sigma_0$ is irreducible and the projection $p:G'_0\to \Sigma_0$ is a one-to-one correspondence. In this case we get $[\Sigma_0]=\Omega$ in $H^4(S\times S)$ (actually, we can consider the equality even in $\CH^2(S\times S)$). Unfortunately, the author does not know how to prove these facts and we avoid them in our proof by introducing below a surface $G'$ as a substitute of $G'_0$.
\end{remark}

Let $\Sigma$ be a surface as in the last lemma and let $G'\subset G'_0$ be any surface dominating $\Sigma$ after the projection $\pi_1: G'_0\to \Sigma_0$. We can consider $G'$ as a subvariety of $Gr(g-1,g+1)$ and define $\Pi_2\subset \P^g\times \P^g$ and $\Pi_3\subset \P^g\times \P^g\times \P^g$ as the universal varieties:
$$
\Pi_2:=\left\{(p_1,p_2)\in \P^g\times \P^g: x_1,x_2\in P\, \mbox{for}\, [P]\in G'\right\},
$$
$$
\Pi_3:=\left\{(p_1,p_2,p_3)\in \P^g\times \P^g\times \P^g: x_1,x_2,x_3\in P\, \mbox{for}\, [P]\in G'\right\}.
$$
We have $\dim \Pi_2=\dim G'+2(g-2)=2g-2$ and $\dim \Pi_3=\dim G'+3(g-2)=3g-4$.
Clearly, $\Sigma\subset\Pi_2\cap S\times S$.

\subsection{Main result}
A key observation for our proof is the following lemma.
\begin{lemma}
\label{mu}
\begin{enumerate}
\item There is a decomposition in $\CH_{2}(S\times S)$ which, in fact, is an equality of effective cycles 
\begin{equation}
\label{eq_1}
\Pi_2|_{S\times S}=\alpha\Delta+\beta\left(\Sigma+\Sigma^T\right),
\end{equation}
where $\Delta$ is the diagonal in $S\times S$.
\item There is a decomposition in $\CH_{2}(S\times S\times S)$
\begin{multline}
\label{eq_2}
\Pi_3|_{S\times S\times S}=\\
\gamma\Delta_{123}+
\varepsilon\left(\delta_{12*}(\Sigma+\Sigma^T)+\delta_{23*}(\Sigma+\Sigma^T)+\delta_{31*}(\Sigma+\Sigma^T)\right),
\end{multline}
where $\Delta_{123}$ is the small diagonal in $S^3$, and $\delta_{12}(x,y)=(x,x,y)$, $\delta_{23}(x,y)=(y,x,x)$, $\delta_{31}(x,y)=(x,y,x)$.
\end{enumerate}
\end{lemma}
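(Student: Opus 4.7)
The plan is to handle both parts along the same lines: first determine the set-theoretic support of the intersections $\Pi_2\cap(S\times S)$ and $\Pi_3\cap S^3$, then verify these are proper intersections by a dimension count, and finally exploit the natural permutation-symmetries of $\Pi_2$ and $\Pi_3$ to bring the multiplicities into the claimed shape.

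For part (a), any $(p_1,p_2)\in\Pi_2\cap(S\times S)$ satisfies $p_1,p_2\in P\cap S$ for some $[P]\in G'$, and since the support of $P\cap S$ is the two-point set $\{q_1,q_2\}$ with $(q_1,q_2)\in\Sigma$, the pair $(p_1,p_2)$ must be one of the four ordered pairs $(q_i,q_j)$. The two diagonal cases $i=j$ contribute to $\Delta$, and the off-diagonal cases contribute to $\Sigma$ and to $\Sigma^T$ respectively. The dimension count $(2g-2)+4-2g=2$, which equals $\dim\Delta=\dim\Sigma=\dim\Sigma^T$, shows that the intersection is proper, so $\Pi_2|_{S\times S}=\alpha\Delta+\beta\Sigma+\beta'\Sigma^T$ in $\CH_2(S\times S)$ with non-negative integer multiplicities. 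Finally, $\Pi_2$ is invariant under the involution $(p_1,p_2)\mapsto(p_2,p_1)$ of $\P^g\times\P^g$, and this involution fixes $\Delta$ while interchanging $\Sigma$ with $\Sigma^T$, which forces $\beta=\beta'$ and hence yields \eqref{eq_1}.

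Part (b) is carried out identically. A triple $(p_1,p_2,p_3)\in\Pi_3\cap S^3$ has each $p_i\in\{q_1,q_2\}$, yielding eight configurations that split into the two ``all equal'' triples $(q_i,q_i,q_i)$ sitting on $\Delta_{123}$, three triples with two $q_1$'s and one $q_2$ (each on one of $\delta_{ij}(\Sigma)$), and three triples with two $q_2$'s and one $q_1$ (each on one of $\delta_{ij}(\Sigma^T)$). The dimension count $(3g-4)+6-3g=2$ again matches the dimension of each of these components and confirms properness, so the cycle decomposes as $\gamma\Delta_{123}+\sum_{i<j}\bigl(\varepsilon_{ij}\delta_{ij*}\Sigma+\varepsilon'_{ij}\delta_{ij*}\Sigma^T\bigr)$. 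The symmetric-group action on the three factors of $\P^g\times\P^g\times\P^g$ preserves $\Pi_3$ and is transitive on each of the two orbits $\{\delta_{ij*}\Sigma\}$ and $\{\delta_{ij*}\Sigma^T\}$, so the $\varepsilon_{ij}$ collapse to a single constant, and likewise the $\varepsilon'_{ij}$.

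The step I expect to be the main obstacle is turning the two symmetric-group-invariant coefficients just obtained into the single $\varepsilon$ appearing in \eqref{eq_2}. I plan to settle this by an explicit multiplicity computation on the universal family $\widetilde{\Pi}_3\subset G'\times\P^g\times\P^g\times\P^g$: the push-forward of the fibrewise scheme $(P\cap S)^3$ contributes to each component of $\Pi_3|_{S^3}$ the length of the corresponding fat point of $(P\cap S)^3$ multiplied by the generic degree of the projection $p\colon G'\to\Sigma$. Comparing these explicit contributions on the $(q_1,q_1,q_2)$-type and $(q_2,q_2,q_1)$-type components, together with the analogous length computation from part (a), is what ultimately identifies the coefficients and produces the compact form \eqref{eq_2}.
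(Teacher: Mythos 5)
Your part (a) is sound and is essentially the paper's own argument: the restriction is supported on $\Delta\cup\Sigma\cup\Sigma^{T}$, the intersection is dimensionally proper, and the transposition of the two factors preserves $\Pi_2$ while exchanging $\Sigma$ and $\Sigma^{T}$, which forces $\beta=\beta'$. For part (b) you are also right --- and in fact more careful than the paper, whose proof of \eqref{eq_2} is just ``similar'' --- in observing that the $S_3$-action only permutes the three surfaces $\delta_{ij*}\Sigma$ among themselves and the three surfaces $\delta_{ij*}\Sigma^{T}$ among themselves: the first family consists of triples containing the fat point $q_1$ twice, the second of triples containing the simple point $q_2$ twice, and no permutation of coordinates exchanges these two types. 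Symmetry therefore only yields $\Pi_3|_{S^3}=\gamma\Delta_{123}+\varepsilon\sum_{ij}\delta_{ij*}\Sigma+\varepsilon'\sum_{ij}\delta_{ij*}\Sigma^{T}$, and the entire remaining content of \eqref{eq_2} is the identification $\varepsilon=\varepsilon'$, which you correctly single out as the crux.

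The gap is that the computation you propose for this last step does not deliver $\varepsilon=\varepsilon'$. Write $P\cap S=W_1\sqcup W_2$ with $W_1$ of length $2g-3$ supported at $q_1$ and $W_2=\{q_2\}$ reduced; the clopen piece of $(P\cap S)^{3}$ supported at $(q_1,q_1,q_2)$ has length $(2g-3)^{2}$, while the piece at $(q_2,q_2,q_1)$ has length $2g-3$. Pushing forward from the universal family over $G'$ (of generic degree $d$ onto $\Sigma$) then gives $\varepsilon=d(2g-3)^{2}$ and $\varepsilon'=d(2g-3)$, i.e. $\varepsilon=(2g-3)\varepsilon'$, which differs from $\varepsilon'$ for all $g\geq 3$. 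That this is the correct way to read off the multiplicities is corroborated by applying the identical computation to \eqref{eq_1}: it yields $\alpha=d\bigl(a(2g-3)^{2}+b\bigr)$ and $\beta=\beta'=d(2g-3)$, hence $\alpha/\beta=a(2g-3)+b/(2g-3)$, which is precisely the relation the paper derives independently in Remark \ref{rem}; it is also consistent with the projection identity $\gamma+\varepsilon a+\varepsilon' b=(2g-2)\alpha$, $\varepsilon+\varepsilon'=(2g-2)\beta$ used in Lemma \ref{mu20}. So the step you flagged as the main obstacle genuinely fails as planned: your method, carried out, establishes a two-coefficient decomposition with $\varepsilon\neq\varepsilon'$ rather than \eqref{eq_2}. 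To salvage the statement one would need either a different argument for $\varepsilon=\varepsilon'$ (the paper's one-line appeal to symmetry does not provide one, for the reason you yourself identified) or a reformulation of the lemma with two coefficients, which in turn would require an extra relation separating $\sum_{ij}\delta_{ij*}\Sigma$ from $\sum_{ij}\delta_{ij*}\Sigma^{T}$ in the proof of the main theorem.
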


\begin{proof}
The proof of (\ref{eq_1}) follows from the facts that $\Pi_2|_{S\times S}$ is symmetric and supported on the union of the diagonal, $\Sigma$ and $\Sigma^T$, and that $\Sigma$ and $\Sigma^T$ are chosen to be irreducible. The proof of (\ref{eq_2}) is similar.
\end{proof}

\begin{lemma} We have:
\label{mu20}
\begin{enumerate}
\item the denominators of ratios $\frac{\alpha}{\beta}$ and $\frac{\gamma}{\varepsilon}$ are non-zero and both ratios are non-negative,
\item the following relation holds
\begin{equation}
\label{a_b}
\frac{\gamma}{\varepsilon}-\frac{3\alpha}{\beta}=-\left(\frac{\alpha}{\beta}+a+b\right),
\end{equation}
where $a$ and $b$ are the degrees of the projections of $\Sigma\subset S\times S$ to its factors.
\end{enumerate}
\end{lemma}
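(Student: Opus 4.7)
The plan is to push forward equation~(\ref{eq_2}) under the projection $\pi_{12}\colon S^3\to S\times S$ and compare the result with $(2g-2)$ times equation~(\ref{eq_1}). The geometric content is that $\pi_{12}$ realises $\Pi_3|_{S^3}$ as a $(2g-2)$-sheeted cover of $\Pi_2|_{S\times S}$: the fibre over a generic $(p_1,p_2)\in\Pi_2|_{S\times S}$ is $P\cap S$ for the (generically unique) $[P]\in G'$ with $p_1,p_2\in P$, a $0$-dimensional scheme of length $2g-2$.

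For part (a), the cycles $\Pi_2|_{S\times S}$ and $\Pi_3|_{S^3}$ are effective, while $\Delta,\Sigma,\Sigma^T$ (resp.~$\Delta_{123}$ and the three $\delta_{ij*}(\Sigma+\Sigma^T)$) are distinct irreducible components of their supports; hence all four coefficients $\alpha,\beta,\gamma,\varepsilon$ are non-negative. Since $\Sigma\subset\Pi_2|_{S\times S}$ is a $2$-dimensional component distinct from $\Delta$ (as $\Sigma$ dominates each factor of $S\times S$ by Lemma~\ref{lemmaaboutsigma}), its multiplicity $\beta$ is strictly positive; similarly $\delta_{31*}\Sigma\subset\Pi_3|_{S^3}$ is distinct from $\Delta_{123}$, so $\varepsilon>0$. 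Both ratios are then well-defined and non-negative.

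For part (b), I compute $\pi_{12*}$ of each summand of the right-hand side of~(\ref{eq_2}) using $a=\deg(\pi_1|_\Sigma)=\deg(\pi_2|_{\Sigma^T})$ and $b=\deg(\pi_2|_\Sigma)=\deg(\pi_1|_{\Sigma^T})$. The induced maps $\pi_{12}\circ\delta_{12}\colon(x,y)\mapsto(x,x)$, $\pi_{12}\circ\delta_{23}\colon(x,y)\mapsto(y,x)$ and $\pi_{12}\circ\delta_{31}\colon(x,y)\mapsto(x,y)$ give
\begin{equation*}
\pi_{12*}\delta_{12*}(\Sigma+\Sigma^T)=(a+b)\Delta,\qquad \pi_{12*}\delta_{23*}(\Sigma+\Sigma^T)=\pi_{12*}\delta_{31*}(\Sigma+\Sigma^T)=\Sigma+\Sigma^T,
\end{equation*}
while $\pi_{12*}\Delta_{123}=\Delta$. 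Summing,
\begin{equation*}
\pi_{12*}[\Pi_3|_{S^3}]=\bigl(\gamma+\varepsilon(a+b)\bigr)\Delta+2\varepsilon(\Sigma+\Sigma^T).
\end{equation*}
On the other hand, by the degree-$(2g-2)$ observation above, $\pi_{12*}[\Pi_3|_{S^3}]=(2g-2)[\Pi_2|_{S\times S}]=(2g-2)\alpha\,\Delta+(2g-2)\beta(\Sigma+\Sigma^T)$. Equating coefficients of the distinct irreducible classes $\Delta$ and $\Sigma+\Sigma^T$ yields
\begin{equation*}
2\varepsilon=(2g-2)\beta\qquad\text{and}\qquad\gamma+\varepsilon(a+b)=(2g-2)\alpha,
\end{equation*}
which combine to $\gamma/\varepsilon=2\alpha/\beta-(a+b)$, equivalent to~(\ref{a_b}).

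The main obstacle is the degree computation $\pi_{12*}[\Pi_3|_{S^3}]=(2g-2)[\Pi_2|_{S\times S}]$: one has to check that the generic fibre of $\pi_{12}|_{\Pi_3|_{S^3}}$ has length exactly $2g-2$ not only over $\Sigma$ and $\Sigma^T$ (where the $[P]$ through $(p_1,p_2)$ is essentially unique) but also over $\Delta$, where several $[P]\in G'$ may pass through a generic diagonal point; these contributions must combine with the multiplicity $\alpha$ of $\Delta$ in $\Pi_2|_{S\times S}$ to give the uniform degree $2g-2$ used in the final comparison.
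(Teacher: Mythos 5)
Your part (a) is correct; your argument for $\beta,\varepsilon>0$ (namely that $\Sigma$, resp.\ $\delta_{31*}\Sigma$, is an irreducible component of the corresponding intersection of the expected dimension $2$, hence appears with multiplicity at least one) is a valid alternative to the paper's, which instead observes that if $\beta=0$ then a nonzero multiple of $\Delta$ would be the restriction of a cycle from $\P^g\times\P^g$, contradicting the fact that the class of the diagonal of a K3 surface has a nonzero transcendental K\"unneth component. For part (b) you follow the same overall strategy as the paper --- push (\ref{eq_2}) forward under $\pi_{12}$ and compare with (\ref{eq_1}) --- and your computation of $\pi_{12*}$ of the right-hand side of (\ref{eq_2}), giving $(\gamma+\varepsilon(a+b))\Delta+2\varepsilon(\Sigma+\Sigma^T)$, is correct. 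However, the way you close the argument has a genuine gap, which you yourself flag: you need the identity $\pi_{12*}\bigl(\Pi_3|_{S^3}\bigr)=(2g-2)\,\Pi_2|_{S\times S}$ to hold \emph{as cycles} in order to equate the coefficients of $\Delta$ and of $\Sigma+\Sigma^T$ separately. That identity is not established, and it is exactly over the diagonal that the fibre of $\pi_{12}|_{\Pi_3\cap S^3}$ and the multiplicities $\alpha,\gamma$ are the quantities you have no a priori control over. Retreating to cycle \emph{classes} does not help, because there $[\Delta]$ and $[\Sigma]+[\Sigma^T]$ are not independent: equation (\ref{eq_1}) itself is a linear relation between them modulo classes restricted from $\P^g\times\P^g$, so coefficient-by-coefficient comparison is not legitimate.

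The repair, which is what the paper's one-line proof amounts to, is to note that you never need the factor $2g-2$. Since $[\Pi_3]$ lives on $\P^g\times\P^g\times\P^g$, its restriction to $S^3$ is in cohomology a polynomial in $L_1,L_2,L_3$, and by the projection formula $\pi_{12*}$ of such a class is a polynomial in $L_1,L_2$; in particular its K\"unneth component in (transcendental part of $H^2(S)$)$\,\otimes\,$(transcendental part of $H^2(S)$) vanishes. Writing $[\Delta]^{tr}$ for that component of $[\Delta]$, which is nonzero for a K3 surface, equation (\ref{eq_1}) gives $[\Sigma]^{tr}+[\Sigma^T]^{tr}=-(\alpha/\beta)[\Delta]^{tr}$, and applying the same projection to your pushforward of (\ref{eq_2}) gives
\begin{equation*}
\Bigl(\gamma+\varepsilon(a+b)-\frac{2\varepsilon\alpha}{\beta}\Bigr)[\Delta]^{tr}=0,
\end{equation*}
hence $\gamma/\varepsilon=2\alpha/\beta-(a+b)$, which is (\ref{a_b}). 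So your pushforward computation is exactly the right one; only the final comparison should be made on the transcendental part of the cohomology classes rather than by matching coefficients of cycles.
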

\begin{proof}
\begin{enumerate}
\item 
As numbers $\alpha, \beta, \gamma,\varepsilon$ are non-negative and we need only to show that $\beta,\varepsilon\neq 0$. Since the diagonal $\Delta$ can not be the restriction of a cycle from $\P^g\times \P^g$, we have $\beta\neq 0$, the proof of $\varepsilon\neq0$ is similar. (We use here the fact that $S$ has some transcendental cohomology, so that the cohomology class of the diagonal of $S$ does not vanish on a product $U\times U$, where $U\subset S$ is dense Zariski open.)

\item Projecting (\ref{eq_2}) to $S\times S$ and taking cohomology classes, we easily conclude that $\frac{\gamma}{\varepsilon}=\frac{2\alpha}{\beta}-a-b$, which is equivalent to 
(\ref{a_b}). 
\end{enumerate}
\end{proof}

\begin{proof}[Proof of Theorem \ref{th_BV}]
We chose a surface $\Sigma$ as in Lemma \ref{lemmaaboutsigma}. 
Due to \cite[Proposition 2.6]{BV}, $\delta_{ij*}(\Pi_2|_{S\times S})$ can be represented by a sum of $Z'|_{S^3}$ and $o_k\times\Delta_{ij}$. We also recall that $\delta_{ij*}(\Delta)=\Delta_{123}$. So, putting (\ref{eq_1}) and (\ref{eq_2}) together, we get a decomposition of the small diagonal:
\begin{equation}
\left(\frac{\gamma}{\varepsilon}-\frac{3\alpha}{\beta}\right)\Delta_{123}=
\alpha_1\Delta_{12}\times o_3+\alpha_2\Delta_{23}\times o_1+\alpha_3\Delta_{13}\times o_2
+Z|_{S^3},
\end{equation}
where $Z\subset \P^g\times\P^g\times\P^g$.

Projecting to $S\times S$ and taking the cohomology classes, we easily conclude that $\alpha_1=\alpha_2=\alpha_3=\gamma/\varepsilon-3{\alpha}/{\beta}$, and by previous lemma, $\alpha_1=-(\alpha/\beta+a+b)$. Since the decomposition of the small diagonal holds in cohomology (due to \cite{BV} and more generally \cite{OGrady}), we can deal with the term $Z|_{S^3}$ as follows: this term is a polynomial in $L_1, L_2, L_3$, where $L_i:=pr_i^*L$ and on the other hand it is cohomologous to
$$
-\alpha_1 (S\times o\times o+o\times S\times o+o\times o\times o\times S).
$$
By \cite{BV} it is thus rationally equivalent to $-\alpha_1 (S\times o\times o+o\times S\times o+o\times o\times o\times S)$. Since $a,b>0$ by choice of $\Sigma$ and $\alpha/\beta$ is non-negative, we can divide the equation by $-\alpha/\beta-a-b$ to get the result. The theorem is proved.
\end{proof}

\begin{remark}
We would like to emphasise that this proof is very  different from the one
used by Beauville and Voisin, which uses the existence of one-parameters families of elliptic curves. It is much more along the lines of the  method used by Voisin in  the Calabi--Yau hypersurface  case, and Fu in the Calabi-Yau complete intersection
 case (see \cite[Theorem 3.1]{V12}, \cite{Fu}). To study the case of Calabi-Yau varieties, we need to replace $G'$ by the set of lines intersecting the hypersurface in two points. In this case, the result \cite[Proposition 2.6]{BV} used in our proof, becomes \cite[Lemma 3.3]{V12}.

\end{remark}

\begin{remark} As proved in \cite{BV}, the decomposition of the small diagonal immediately
gives the fact that $c_2(S)$ is proportional to the canonical cycle $o$.
From our proof we can easily get  another more direct proof of this fact, using only (\ref{eq_1}). 
Indeed, let us intersect the decomposition (\ref{eq_1}) with $\Delta$. We get that $\alpha c_2(S)$ is a combination of a canonical zero cycle (corresponding to the term $\Pi_2|_{S\times S}$) and zero cycles supported on $\Sigma\cap\Delta$ and $\Sigma^T\cap\Delta$. But clearly the points on $\Sigma\cap\Delta$ are rationally equivalent to $o$. This proves the statement concerning 
$c_2$, once we prove that $\alpha\neq0$, which can be derived from the following remark or from the proofs of Lemma \ref{lemmaaboutsigma} and of Proposition \ref{thelemma} in the next section.
\end{remark}

\begin{remark}
\label{rem}
Let us present a relation between $\alpha/\beta$, $a$, and $b$. We see from the definition of $\Sigma$ that for any
$(x,y)\in\Sigma$, we have the equality
$$ (2g-3)x+y=L^2\,\,{\rm in}\,\,{\rm CH}_0(S).$$
It follows that  we have for any
$x\in S$
$$\Sigma_*(x)= -a(2g-3)x +C \,\,{\rm in}\,\,{\rm CH}_0(S),$$
where $C$ is a constant multiple of $L^2$.
Similarly
$$\Sigma^T_*(x)= -\frac{b}{2g-3}x+C' \,\,{\rm in}\,\,{\rm CH}_0(S).$$
Applying (\ref{eq_1}) and the fact that $\Pi_2$ is restricted from $\mathbb{P}^g\times \mathbb{P}^g$,
we thus conclude that for any $x\in S$
$$C''=-a(2g-3)x-\frac{b}{2g-3}x+\frac\alpha\beta x \,\,{\rm in}\,\,{\rm CH}_0(S),$$
where $C''$ is a constant multiple of $L^2$.
It follows that 
$$
\alpha/\beta=a(2g-3)+\frac{b}{2g-3}.
$$
\end{remark}

\section{Proof of the fact that  $\mathrm{supp}\,\Omega$ dominates factors}

The goal of this section is to prove the following lemma.
\begin{proposition}
\label{thelemma}
The class $\Omega=E^*_{2g-3,1}(Gr)$ in $H^*(S\times S)$ can not be represented by a cycle supported on the union of divisors of the form $D_i\times S$ and $S\times D_j$ and hence its support has non-trivial projections to factors of $S\times S$
\end{proposition}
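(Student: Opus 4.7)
The plan is to identify the transcendental--transcendental component of $[\Omega]\in H^4(S\times S)$ and show that it is nonzero. Any $2$-cycle supported on $D\times S$ for a divisor $D\subset S$ has cohomology class in $\mathrm{NS}(S)_{\mathbb Q}\otimes H^2(S)\oplus H^4(S)\otimes H^0(S)$, and symmetrically for $S\times D'$. Hence cycles supported on unions of such divisors have cohomology class in the subspace $W\subset H^4(S\times S)$ whose orthogonal complement under Poincar\'e duality contains $T(S)\otimes T(S)$, where $T(S)$ denotes the transcendental lattice of $S$. It therefore suffices to show that, for a nonzero holomorphic $2$-form $\sigma$ on $S$,
\begin{equation*}
\int_{S\times S}[\Omega]\cdot \pi_1^*\bar\sigma\cdot\pi_2^*\sigma\;\neq\;0,
\end{equation*}
since this pairing detects the class of $[\Omega]$ modulo $W$ in the Hodge component $H^{2,0}(S)\otimes H^{0,2}(S)\subset T(S)\otimes T(S)$.

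First, I apply the Lehn--Sorger--Voisin formalism (\cite{CM,L98,LS00,V07}) to obtain an explicit formula for $E^*_{2g-3,1}(Gr)$. Under the Nakajima basis, $H^*(S^{[n]})$ is spanned by monomials $\mathfrak q_{m_1}(\alpha_1)\cdots\mathfrak q_{m_k}(\alpha_k)|0\rangle$ with $\sum m_i=n$ and $\alpha_i\in H^*(S)$; the projection $E^*_M$ retains exactly the summand whose underlying partition equals $M$. Combined with Lehn's formula expressing $c_i(\mathcal L^{[n]})$ as explicit polynomials in the Nakajima creation/annihilation operators, and with (\ref{class_of_Gr}), this yields a closed expansion of $\Omega$ in the Nakajima basis indexed by the partition $(2g-3,1)$.

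Second, I compute the pairing with $\pi_1^*\bar\sigma\cdot\pi_2^*\sigma$. Because $\sigma,\bar\sigma$ are orthogonal to every algebraic class on $S$, only monomials in which one instance of $\sigma$ is carried by $\mathfrak q_{2g-3}$ and one instance of $\bar\sigma$ by $\mathfrak q_1$ (or vice versa) contribute nontrivially; every other contribution involves a product $\sigma\cdot\beta$ or $\bar\sigma\cdot\gamma$ with $\beta,\gamma$ algebraic, and vanishes by orthogonality. This isolates a single combinatorial coefficient, which is a polynomial in $g$ and in the elementary invariants $L^2=2g-2$ and $c_2(S)=24$.

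The principal obstacle is the combinatorial bookkeeping in Lehn's vertex-operator calculus: expressing $c_{g-2}, c_{g-1}, c_g$ of $\mathcal L^{[2g-2]}$ as polynomials in Nakajima operators, substituting into (\ref{class_of_Gr}), and isolating the $(2g-3,1)$-component that contributes to the transcendental pairing. Once this reduction is carried out, non-vanishing of the resulting polynomial in $g$ can be verified either by explicit evaluation at a small value of $g$ (e.g.\ $g=3$, where $S\subset\P^3$ is a smooth quartic and $\Sigma_0$ is accessible by direct geometry) or by a leading-order analysis as $g\to\infty$, and the proposition follows.
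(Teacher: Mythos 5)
Your reduction is the same as the paper's: a cycle supported on $\bigcup_i D_i\times S\cup\bigcup_j S\times D_j$ acts trivially on $H^{2,0}(S)$, so it suffices to show that the transcendental--transcendental (equivalently, the diagonal) component of $\Omega=E^*_{2g-3,1}(Gr)$ is nonzero; and, like the paper, you propose to extract that component from (\ref{class_of_Gr}) via the Lehn--Sorger/Nakajima description of $H^*(S^{[2g-2]})$, discarding all terms involving $L$ since they pair to zero with $\pi_1^*\bar\sigma\cdot\pi_2^*\sigma$. Up to this point the plan is sound and faithful to the paper.

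The genuine gap is that you never establish the non-vanishing of the resulting coefficient, and this is precisely where all the work lies. After setting $L=0$ the relevant class is $c_{g-1}^2(\mathcal O^{[2g-2]})-c_{g-2}(\mathcal O^{[2g-2]})c_g(\mathcal O^{[2g-2]})$, and by (\ref{ci}) the diagonal coefficient becomes a \emph{difference} of two counts of pairs of permutations in $S_{2g-2}$ (the quantities $G(g-1,g-1)$ and $G(g-2,g)$ of Lemma \ref{lp2}): two large positive numbers that could a priori cancel. The paper's entire Section 2 is devoted to proving the strict inequality $G(g-1,g-1)>G(g-2,g)$ by a delicate inductive injection argument on sets of pairs of permutations (Lemmas \ref{lp1} and \ref{lp2}). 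Your two proposed shortcuts do not close this: verifying the case $g=3$ proves nothing for general $g$ unless you first show the coefficient is a polynomial in $g$ of controlled degree, and that premise is unjustified --- these are counts over symmetric groups $S_{2g-2}$ constrained by reduced-length conditions, not obviously polynomial in $g$; and the ``leading-order analysis as $g\to\infty$'' is exactly the hard cancellation estimate you would need to carry out, plus a treatment of all finitely many remaining $g$. As written, the proposal states the correct strategy but omits the proof of the one fact the proposition actually requires.
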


To prove Proposition \ref{thelemma} we study $H^*(S^{[2g-2]})$ 
and introduce the following notation. Let
$$
M=(m_1,m_2,\ldots,m_k)
$$
be a partition of $\{1,\ldots, 2g-2\}$. Such a partition determines a partial diagonal
$$
S_M\cong S^k\subset S^{2g-2},
$$
defined by the conditions
$$
x=(x_1,\ldots, x_{2g-2})\in S_M\iff x_i=x_j\, \mbox{if}\,\, i,j\in m_l, \mbox{for some}\, l.
$$
Consider the quotient map
$$
q_M:S^k\cong S_M\to S^{(2g-2)},
$$
and denote by $E_M$ the following fibered product:
$$
E_M:= S_M\times_{S^{(2g-2)}}S^{[2g-2]}\subset S^k\times S^{[2g-2]}.
$$
We view $E_M$ as a correspondence between $S^k$ and $S^{[2g-2]}$ and we will denote by $E^*_M:\CH(S^{[2g-2]})\to\CH(S^m)$ the map
$$
\alpha\to \pi_{1*}(\pi_2^*(\alpha)\cdot E_M).
$$
The main point of the proofs is considering $E_{M}^*(Gr)$ for the partition $M=(\{1,\ldots,2g-3\},\{2g-2\})$ and the intersection
$$
\pi_{2*}(E_M)\cdot(c^2_{g-1}(\mathcal L^{[2g-2]})-c_g(\mathcal L^{[2g-2]})c_{g-2}(\mathcal L^{[2g-2]}))
$$
We now turn our attention to the cup product on Hilbert scheme $S^{[2g-2]}$.

\subsection{Cup product on $S^{[n]}$}

The paper \cite{LS00} gives a description on the ring structure on $H^*(S^{[n]})$; the following theorem holds (cf. \cite[Theorem 3.2]{LS00}):
\begin{theorem}
Let $S$ be a smooth projective surface with numerically trivial canonical class. Then there is a canonical isomorphism of graded rings
$$
(H^*(S;\mathbb Q)[2])^{[n]}\to H^*(S^{[n]};\mathbb Q)[2n].
$$
\end{theorem}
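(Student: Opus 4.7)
The plan is to convert the assertion about cycle supports into a concrete numerical non-vanishing in $H^*(S\times S)$, and then to verify the latter using the Fock-space description of $H^*(S^{[n]})$ recalled above.

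First, I would make a cohomological reduction. Suppose $\Omega$ is represented by a cycle $Z$ supported on a union $\bigcup_i(D_i\times S)\cup\bigcup_j(S\times D_j)$. For $D\subset S$ a curve, K\"unneth together with $H^1(S)=H^3(S)=0$ shows that any 2-cycle on $D\times S$ pushes forward to a class in $[D]\otimes H^2(S)+H^4(S)\otimes H^0(S)\subset H^4(S\times S)$, and symmetrically for $S\times D$. Using $\Pic(S)=\mathbb Z[L]$, every $[D_i],[D_j]$ is a rational multiple of $L$, so $[Z]$ must lie in
\begin{equation*}
V:=L\otimes H^2(S)+H^2(S)\otimes L+H^4(S)\otimes H^0(S)+H^0(S)\otimes H^4(S).
\end{equation*}
Writing $H^2(S)=\mathbb Q L\oplus T(S)$ with $T(S)$ the transcendental lattice, the Poincar\'e-orthogonal of $V$ inside $H^4(S\times S;\mathbb C)$ is exactly $T(S)\otimes T(S)$: the test class $\sigma_S\otimes\overline\sigma_S$ annihilates $V$ because $L\cdot\sigma_S\in H^{3,1}(S)=0$, while $\int_S\sigma_S\wedge\overline\sigma_S\neq0$. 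Hence the proposition reduces to the numerical non-vanishing
\begin{equation*}
\langle[\Omega],\sigma_S\otimes\overline\sigma_S\rangle\;\neq\;0.
\end{equation*}

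Second, I would transfer this pairing to the Hilbert scheme. By adjunction along the correspondence $E_M\subset S^2\times S^{[2g-2]}$,
\begin{equation*}
\langle[\Omega],\sigma_S\otimes\overline\sigma_S\rangle=\int_{S^{[2g-2]}}[Gr]\cdot(E_M)_*(\sigma_S\otimes\overline\sigma_S).
\end{equation*}
Through the Lehn-Sorger isomorphism $(H^*(S)[2])^{[n]}\simeq H^*(S^{[n]})[2n]$, the partial-diagonal class $(E_M)_*(\sigma_S\otimes\overline\sigma_S)$ becomes, up to an explicit combinatorial factor, the Nakajima state $\mathfrak{q}_{2g-3}(\sigma_S)\mathfrak{q}_1(\overline\sigma_S)|0\rangle$. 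On the same Fock space, Lehn's formula realizes each Chern class $c_i(\mathcal L^{[n]})$ as an explicit polynomial in creation-annihilation operators indexed by classes on $S$, so $[Gr]=c_{g-2}c_g-c_{g-1}^2$ becomes a concrete operator expression.

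The final step is the combinatorial evaluation of the pairing $\langle[Gr],\mathfrak{q}_{2g-3}(\sigma_S)\mathfrak{q}_1(\overline\sigma_S)|0\rangle$. Because $\sigma_S$ and $\overline\sigma_S$ each pair trivially with $L$ and with themselves in $H^*(S)$, the only Wick contractions that survive are those pairing $\sigma_S$ directly against $\overline\sigma_S$ via $\int_S\sigma_S\wedge\overline\sigma_S$; the remaining operator slots must be filled by $L$ and its self-intersections. The asymmetry of the partition $M=(2g-3,1)$ forces $c_{g-2}c_g$ and $c_{g-1}^2$ to contribute to this single surviving transcendental contraction with different combinatorial weights, so their Porteous-style difference does not cancel. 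The main obstacle is precisely this last comparison: one must carry Lehn's operator expressions to the required order and verify term by term that the resulting net coefficient, an explicit rational function of $g$, is non-zero.
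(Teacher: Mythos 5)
Your proposal does not prove the statement at hand. The statement is the Lehn--Sorger theorem, which the paper merely quotes from \cite[Theorem 3.2]{LS00}: for a smooth projective surface $S$ with numerically trivial canonical class there is a canonical isomorphism of graded rings $(H^*(S;\mathbb Q)[2])^{[n]}\to H^*(S^{[n]};\mathbb Q)[2n]$. What you have sketched instead is an argument for Proposition~\ref{thelemma} of the paper, namely that the class $\Omega=E^*_{2g-3,1}(Gr)$ cannot be represented by a cycle supported on divisors of the form $D_i\times S$ and $S\times D_j$: you reduce that support statement to the non-vanishing of $\langle[\Omega],\sigma_S\otimes\overline\sigma_S\rangle$ and then propose a Fock-space evaluation of this pairing. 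That is a different statement from the one you were asked to prove. Worse, your sketch explicitly invokes ``the Lehn--Sorger isomorphism $(H^*(S)[2])^{[n]}\simeq H^*(S^{[n]})[2n]$'' as a known tool in its second step, so read as a proof of the statement in question it is circular: the theorem appears as an ingredient of its own proof.

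A genuine proof of the stated theorem requires entirely different input. One must first construct the ring $(H^*(S;\mathbb Q)[2])^{[n]}=\bigl(H^*(S;\mathbb Q)[2]\{S_n\}\bigr)^{S_n}$ purely from the graded Frobenius-algebra structure of $H^*(S;\mathbb Q)$ together with symmetric-group combinatorics (the grading by permutations, the multiplication defined through the graph-defect terms controlled by $l(\sigma)$), and then identify this ring with the geometric cohomology ring $H^*(S^{[n]};\mathbb Q)$; in \cite{LS00} this identification is carried out by expressing the cup product through Nakajima creation operators and Lehn's Chern-character operators for tautological sheaves and comparing the two ring structures inductively, and the hypothesis that $K_S$ is numerically trivial is exactly what makes the combinatorial model close under multiplication without correction terms. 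None of this appears in your proposal; the correspondence and Wick-contraction computation you outline is a consumer of the theorem, not a proof of it. (Note also that the paper itself gives no proof of this theorem --- it is cited --- so there is no internal argument for you to have matched; but your text should then be offered as a proof of Proposition~\ref{thelemma}, where it would have to be compared with the paper's combinatorial argument via the sets $\mathcal G(i,j)$ and Lemmas~\ref{lp1} and~\ref{lp2}, not as a proof of the Lehn--Sorger isomorphism.)
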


In the theorem above we define $A^{[n]}$ as
$$
A^{[n]}:=(A\{S_n\})^{S_n}.
$$
It is the subspace of invariants of the ring $A\{S_n\}$, which has the following grading by permutations in $S_n$
$$
A\{S_n\}:=\oplus_{\pi\in S_n}A^{\otimes (\pi)\textbackslash [n]}\cdot \pi.
$$

To describe $c_i(\mathcal O^{[2g-2]})$ in these terms, let us introduce the following notation. If $\sigma\in S_n$ is a permutation, then let $c(\sigma)$ be the number of cycles in $\sigma$ and $l(\sigma)=n-c(\sigma)$. The number $l(\sigma)$ is the minimal number of permutation needed to generate $\sigma$.

The statement \cite[Proposition 4.3]{LS00} (see also \cite{L98}) gives
\begin{equation}
\label{ci}
c_i(\mathcal O^{[2g-2]})=\epsilon_i,\ \mbox{where}\, \epsilon_i:=(-1)^i\sum_{l(\sigma)=i}\sigma\in H^*(S)^{[2g-2]}.
\end{equation}

The class of $E_{M}$ for $M=(\{1,\ldots,2g-3\},\{2g-2\})$
 is proportional to the sum of all permutations which contains one cycle of length $2g-3$.

\subsection{Two lemmas and the proof} Before we start the proof of Proposition \ref{thelemma}, we would like to state two lemmas about transpositions. Let us enumerate all transpositions in $S_{2g-3}$ by $s_1, s_2,\ldots s_{(2g-3)(g-2)}$ in such a way that
$$
l(s_1\cdot\ldots\cdot s_{2g-4})=2g-4
$$
and define $A(k)$ as the set of all permutations  $\sigma\in S_{2g-3}$ such that $l(\sigma s_i)>l(\sigma)$ for any $i\leq k$. We note that for $\sigma\in A(k)$ one has $\sigma\in A(k+1)$ or $\sigma=\sigma's_{k+1}$ for some $\sigma'\in A(k+1)$.

Let us define set of pairs $\mathcal F_k(i,j,\tau)$:
$$
\{(\sigma_1,\sigma_2)\in A_k\times A_k:l(\sigma_1)=i, l(\sigma_2)=j, l(\sigma_1\sigma_2\tau)=i+j+l(\tau)\}.
$$
And let $F_k(i,j,\tau)$ be the number of elements in $\mathcal F_k(i,j,\tau)$.

\begin{lemma}
\label{lp1}
If $i+1<j$, one has
\begin{equation}
\label{FF}
F_k(i,j,\tau)\leq F_k(i+1,j-1,\tau).
\end{equation}
and the inequality is strict in the case $i=g-3$, $j=g-1$, $k=0$, $\tau=id$.
\end{lemma}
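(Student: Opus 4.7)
My plan is to prove $F_k(i,j,\tau)\le F_k(i+1,j-1,\tau)$ by constructing an explicit injection
\[
\phi:\mathcal F_k(i,j,\tau)\hookrightarrow \mathcal F_k(i+1,j-1,\tau),\qquad (\sigma_1,\sigma_2)\mapsto (\sigma_1 s,\,s\sigma_2),
\]
where $s$ is a canonically chosen simple transposition that is a left descent of $\sigma_2$, and then, in the distinguished case $(i,j,k,\tau)=(g-3,g-1,0,\mathrm{id})$, to exhibit an element of the target that cannot lie in the image.

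First, I would recast the hypothesis $l(\sigma_1\sigma_2\tau)=l(\sigma_1)+l(\sigma_2)+l(\tau)$ in the language of the Coxeter group $S_{2g-3}$: it is equivalent to saying that concatenating any reduced expressions of $\sigma_1,\sigma_2,\tau$ yields a reduced expression of the product. A short strong-exchange argument then shows that if $s$ is any left descent of $\sigma_2$, i.e.\ a simple transposition with $l(s\sigma_2)=j-1$, then $l(\sigma_1 s)=i+1$ is automatic (otherwise one could shorten $\sigma_1\sigma_2=(\sigma_1 s)(s\sigma_2)$ by two), and consequently the pair $(\sigma_1 s,\,s\sigma_2)$ still satisfies $l((\sigma_1 s)(s\sigma_2)\tau)=i+j+l(\tau)=(i+1)+(j-1)+l(\tau)$.

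To define $\phi$, I would take $s$ to be the left descent of $\sigma_2$ of smallest index in the enumeration $s_1,s_2,\ldots$ of transpositions; such an $s$ exists because $j\ge 2$, and because the hypothesis $l(s_1\cdots s_{2g-4})=2g-4$ forces each of $s_1,\ldots,s_{2g-4}$ to be a simple transposition the choice is sensible. Injectivity is then proved by recovering $s$ from the image $(\sigma_1',\sigma_2')$ as the smallest-index simple transposition which is a right descent of $\sigma_1'$ and is not a left descent of $\sigma_2'$, so that $(\sigma_1,\sigma_2)=(\sigma_1' s,\,s\sigma_2')$. The delicate step is to verify that $\phi$ actually lands in $A(k)\times A(k)$: for this I would interpret $A(k)$ as the set of minimal-length right coset representatives of the parabolic subgroup $W_k=\langle s_1,\ldots,s_k\rangle\subset S_{2g-3}$ and use the Coxeter lifting property to transport the defining descent conditions of $A(k)$ across right multiplication by $s$ on the first factor and left multiplication by $s$ on the second.

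For the strict inequality in the case $(i,j,k,\tau)=(g-3,g-1,0,\mathrm{id})$, the constraint is vacuous since $A(0)=S_{2g-3}$, so I only need a pair $(\sigma_1',\sigma_2')$ with $l(\sigma_1')=l(\sigma_2')=g-2$ and $l(\sigma_1'\sigma_2')=2g-4$ that is not of the form $(\sigma_1 s,\,s\sigma_2)$ produced by $\phi$. A concrete obstruction: pick $(\sigma_1',\sigma_2')$ for which every right descent of $\sigma_1'$ is also a left descent of $\sigma_2'$; such pairs are easy to produce for $g\ge 3$, and by the characterisation above they lie outside $\mathrm{Im}\,\phi$. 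The main obstacle in the whole argument is the verification that $\phi$ preserves membership in $A(k)$ in general; fortunately the strictness case has $k=0$ and sidesteps this entirely, so strictness comes for free once the injection is defined.
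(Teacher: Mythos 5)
Your proposal rests on a misreading of the combinatorial setup. In this paper $l(\sigma)=n-c(\sigma)$ is the \emph{absolute} (reflection) length --- the minimal number of arbitrary transpositions needed to write $\sigma$ --- and $s_1,\ldots,s_{(2g-3)(g-2)}$ is an enumeration of \emph{all} transpositions of $S_{2g-3}$, not of the simple ones. (In particular, $l(s_1\cdots s_{2g-4})=2g-4$ only says that this product is a $(2g-3)$-cycle; it does not force the $s_i$ to be simple.) Consequently the Coxeter-theoretic apparatus you invoke --- reduced expressions in simple reflections, left/right descents, the strong exchange condition, $W_k=\langle s_1,\ldots,s_k\rangle$ as a parabolic subgroup with $A(k)$ its set of minimal coset representatives, and the lifting property --- does not apply to $A(k)$, which is defined by the condition $l(\sigma s_i)>l(\sigma)$ for $i\le k$ with respect to absolute length. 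The one piece that does survive the translation is the length bookkeeping: if $l(s\sigma_2)=j-1$ for a transposition $s$, then subadditivity of $l$ forces $l(\sigma_1 s)=i+1$ and $l((\sigma_1 s)(s\sigma_2)\tau)=(i+1)+(j-1)+l(\tau)$. But the step you yourself flag as delicate --- that $(\sigma_1 s,s\sigma_2)\in A(k)\times A(k)$ --- is exactly the one your proposed tools cannot handle, and the lemma is asserted (and, in the paper's own inductive proof, needed) for all $k$, not just $k=0$. Your injectivity argument is also incomplete: you must show that the transposition $s$ chosen in the forward map coincides with the one your recovery rule extracts from $(\sigma_1 s,s\sigma_2)$, and nothing in the proposal rules out a smaller-index transposition satisfying the recovery criteria.

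The strictness argument fails outright. For any pair $(\sigma_1',\sigma_2')\in\mathcal F_0(g-2,g-2,\mathrm{id})$ and any transposition $t$ with $l(\sigma_1' t)=g-3$, subadditivity gives $2g-4=l(\sigma_1'\sigma_2')\le l(\sigma_1' t)+l(t\sigma_2')$, hence $l(t\sigma_2')\ge g-1$, i.e.\ $t$ is automatically \emph{not} a ``left descent'' of $\sigma_2'$. So the obstruction class you propose (pairs in which every right descent of $\sigma_1'$ is a left descent of $\sigma_2'$) is empty, and no element outside $\mathrm{Im}\,\phi$ is produced. By contrast, the paper proves the lemma by an entirely different route: it partitions $\mathcal F_k(i,j,\tau)$ according to membership of $\sigma_1,\sigma_2$ in $A(k+1)$, obtains the recursion $F_k(i,j,\tau)=F_{k+1}(i,j,\tau)+F_{k+1}(i-1,j,s_{k+1}\tau)+F_{k+1}(i,j-1,s_{k+1}\tau)$, and reduces by induction to the base inequality $F_k(0,j,\tau)\le F_k(1,j-1,\tau)$, where strictness comes from exhibiting a single explicit element. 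If you want to pursue a direct injection, you would need to rebuild it inside the absolute-order framework and find a genuinely non-vacuous witness for strictness; as written, the argument does not close.
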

\begin{proof}
The set $\mathcal F_k(i,j,\tau)$ can be divided in four subsets:
\begin{enumerate}
\item pairs $(\sigma_1,\sigma_2)$ such that $\sigma_1,\sigma_2\in A(k+1)$. This subset coincides with $\mathcal F_{k+1}(i,j,\tau)$.
\item pairs $(\sigma_1,\sigma_2)$ such that $\sigma_1\in A(k+1)$ and $\sigma_2\notin A(k+1)$, so $\sigma_2=\sigma'_2s_{k+1}$ with $l(\sigma_2')=j-1$. This subset is in bijection with pairs $(\sigma_1,\sigma_2')$ of $\mathcal F_{k+1}(i,j-1,s_{k+1}\tau)$.
\item pairs $(\sigma_1,\sigma_2)$ such that $\sigma_1\notin A(k+1)$ and $\sigma_2\in A(k+1)$, so $\sigma_1=\sigma'_1s_{k+1}$ with $l(\sigma_1')=i-1$. This subset is in bijection with pairs $(\sigma_1',s_{k+1}\sigma_2s_{k+1})$ of $\mathcal F_{k+1}(i-1,j,s_{k+1}\tau)$.
\item pairs $(\sigma_1,\sigma_2)$ such that $\sigma_1,\sigma_2\notin A(k+1)$, so $\sigma_1=\sigma_1's_{k+1}$ and $s_{k+1}\sigma_2'$, hence $l(\sigma_1\sigma_2\tau)\leq l(\sigma_1)\cdot l(\sigma_2)\cdot l(\tau)-2$. This subset is empty.
\end{enumerate}

So we have
\begin{multline}
F_k(i,j,\tau)=F_{k+1}(i,j,\tau)+F_{k+1}(i-1,j,s_{k+1}\tau)+\\
F_{k+1}(i,j-1,s_{k+1}\tau).
\end{multline}
The proof easily follows by induction if we prove that
$$
F_k(0,j,\tau)\leq F_k(1,j-1,\tau)
$$
for all $j, k$ and all $\tau$. Inequality follows from the fact that
any permutation $\sigma_2$, where $(id, \sigma_2)\in\mathcal F_k(0,j,\tau)$, has (more than one) decomposition $\sigma_1'\sigma_2'$, where $(\sigma_1',\sigma_2')\in\mathcal F_k(1,j-1,\tau)$. Different $\sigma_2$ has different decompositions.
We note that the inequality is strict if the lefthand side is positive, i.e., there exist at least one $\sigma_2$. We can consider
$$
(\sigma_1,\sigma_2)=(s_1\ldots s_{g-2}, s_{g-1}\ldots s_{2g-4})
$$
which provide with a non-emptyness of $F_{g-2}(0,g-2,\sigma_1^{-1})$.
\end{proof}

Let $\mathcal G(i,j)$ be the set of all pairs $(\sigma_1,\sigma_2)\in S_{2g-2}\times S_{2g-2}$ such that
$$
l(\sigma_1)+l(\sigma_2)-2=l(\sigma_1\sigma_2)
$$
and with the composition $\sigma_1\sigma_2$ preserves the point $2g-2$: $(\sigma_1\sigma_2)(2g-2)=2g-2$. Let $G(i,j)$ be the number of elements in $\mathcal G(i,j)$.

\begin{lemma}
\label{lp2}
One has
$$
G(g-1,g-1)>G(g-2,g).
$$
\end{lemma}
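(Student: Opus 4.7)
The plan is to reduce $G(i,j)$ to an $F_0$-count in $S_{2g-3}$, so that the strict inequality already proved in Lemma \ref{lp1} can be invoked. The first step is to split $\mathcal G(i,j)$ according to how $\sigma_1,\sigma_2$ individually act on the point $2g-2$: since $(\sigma_1\sigma_2)(2g-2)=2g-2$ is part of the definition, either both $\sigma_i$ fix $2g-2$ (call this $\mathcal A(i,j)$, pairs living in $S_{2g-3}$) or both move it (call this $\mathcal B(i,j)$); a mixed case is immediately impossible. Hence $\mathcal G(i,j)=\mathcal A(i,j)\sqcup \mathcal B(i,j)$.

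The main computation is a bijection for $\mathcal B(i,j)$. For $(\sigma_1,\sigma_2)\in\mathcal B(i,j)$ the condition on the product forces $\sigma_2(2g-2)=\sigma_1^{-1}(2g-2)=:c\in\{1,\ldots,2g-3\}$. Setting $t=(c,2g-2)$ and $\sigma_1''=\sigma_1 t$, $\sigma_2''=t\sigma_2$, I would check (via a direct cycle-structure analysis) that $\sigma_1'',\sigma_2''\in S_{2g-3}$, that $l(\sigma_1'')=i-1$ and $l(\sigma_2'')=j-1$ (right/left multiplication by $t$ splits the cycle of $\sigma_1$, resp.\ $\sigma_2$, containing $2g-2$), and that $\sigma_1\sigma_2=\sigma_1''\sigma_2''$. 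The hypothesis $l(\sigma_1\sigma_2)=i+j-2$ thus becomes $l(\sigma_1''\sigma_2'')=l(\sigma_1'')+l(\sigma_2'')$, which is precisely the no-cancellation condition defining $\mathcal F_0(i-1,j-1,\mathrm{id})$ in $S_{2g-3}$. The construction is reversible, so the triple $(c,\sigma_1'',\sigma_2'')$ recovers $(\sigma_1,\sigma_2)$, giving
\[
|\mathcal B(i,j)|=(2g-3)\,F_0(i-1,j-1,\mathrm{id}).
\]
The strict inequality case of Lemma \ref{lp1} then yields $|\mathcal B(g-2,g)|<|\mathcal B(g-1,g-1)|$.

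The remaining task, and what I expect to be the main obstacle, is to establish $|\mathcal A(g-1,g-1)|\geq|\mathcal A(g-2,g)|$; adding this to the strict bound on $\mathcal B$ gives $G(g-1,g-1)>G(g-2,g)$. The sets $\mathcal A(i,j)$ count pairs in $S_{2g-3}$ with $l(\sigma_1\sigma_2)=i+j-2$ (one cancellation rather than the no-cancellation condition defining $\mathcal F_0$), so Lemma \ref{lp1} does not apply directly. My plan is to prove an analog of Lemma \ref{lp1} for $\mathcal A$, namely $|\mathcal A(i,j)|\leq|\mathcal A(i+1,j-1)|$ when $i+1<j$, by factoring off a transposition $u$ from one of the $\sigma_i$ whose two moved points lie in the same cycle (which absorbs the single cancellation) and then running the same recursion over an enumeration of transpositions that drives Lemma \ref{lp1}. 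This expresses $|\mathcal A(i,j)|$ as a weighted sum of $\mathcal F_0$-type counts in $S_{2g-3}$, whose balance-maximisation is exactly the content of Lemma \ref{lp1}. The delicate combinatorial step is keeping track of the weights so that the inductive inequality goes in the right direction; once that is done, combining with the strict $\mathcal B$-inequality yields the lemma.
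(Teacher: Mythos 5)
Your analysis of the subset $\mathcal B(i,j)$ (both permutations move $2g-2$) is correct and coincides with the paper's argument: your map $(\sigma_1,\sigma_2)\mapsto(\sigma_1t,\,t\sigma_2)$ with $t=(c,2g-2)$, $c=\sigma_1^{-1}(2g-2)$, is exactly the paper's ``delete the point $2g-2$ from its cycle'' map $f$, and both yield $|\mathcal B(i,j)|=(2g-3)\,F_0(i-1,j-1,\mathrm{id})$, after which the strict case of Lemma \ref{lp1} gives $|\mathcal B(g-1,g-1)|>|\mathcal B(g-2,g)|$. Your remark that the mixed case (one permutation fixes $2g-2$, the other moves it) is impossible is also right.

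The gap is the piece $\mathcal A(i,j)$. You correctly observe that the definition of $\mathcal G(i,j)$ as literally written admits pairs in which both $\sigma_1$ and $\sigma_2$ fix $2g-2$, you reduce the lemma to the inequality $|\mathcal A(g-1,g-1)|\geq|\mathcal A(g-2,g)|$, and you then only outline a strategy for it (factor off a transposition absorbing the single cancellation and rerun the recursion of Lemma \ref{lp1}); as written this is an assertion with a heuristic plan, not a proof, and the weight bookkeeping you yourself flag as delicate is never carried out. Since $\mathcal A$ is a priori nonempty for these parameters (a permutation of $\{1,\dots,2g-2\}$ with $l=g-2$ or $g-1$ necessarily has fixed points), the inequality cannot simply be waved through. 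Note how the paper avoids this: in the proof of Proposition \ref{thelemma} the relevant pairs are additionally subject to condition (c) (no common fixed point), which in particular excludes every pair fixing $2g-2$ simultaneously, so the intended $\mathcal G(i,j)$ is your $\mathcal B(i,j)$ and $\mathcal A(i,j)$ is excluded by definition; the paper's one-line proof of the lemma tacitly uses this (and is itself sloppy about reconciling the stated definition of $\mathcal G$ with its use). So you should either import that condition into the definition of $\mathcal G$ --- in which case your completed $\mathcal B$ computation is essentially the whole proof, modulo checking how the remaining ``no common fixed point elsewhere'' constraint interacts with the $(2g-3)$-fold covering count --- or actually prove the $\mathcal A$-inequality, which you have not done.
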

\begin{proof} Let us define a map of sets $f: S_n\to S_{n-1}$ by the following way: if $\sigma$ has a cycle $(\ldots, i,n,j,\ldots)$ we replace it by a cycles $(\ldots, i,j,\ldots)$. Clearly, if $\sigma(n)\neq n$ than $l(\sigma')=l(\sigma)-1$.

Now it is easy to see that $f(\mathcal G(i,j))$ is $(2g-3)-$fold covering of $\mathcal F_0(i-1,j-1,id)$, where $\mathcal F_0(i-1,j-1,id)$ was defined before Lemma \ref{lp1}. Due to Lemma \ref{lp1}, we get the result.
\end{proof}

\begin{proof}[Proof of Proposition \ref{thelemma}]


We recall that class of $Gr$ is given by (\ref{class_of_Gr}).
Using Grothendieck-Riemann-Roch theorem it is easy to see that classes $c_k(\mathcal L^{[2g-2]})$ are polynomials in $L_i$ and in other classes. We will be interesting in the coefficient of the diagonal in the decomposition of $E^*_{2g-3,1}(Gr)$, so we pay attention only to the part without $L_i$. Now we can put formally $L=0$ and we consider the class
$$
c_{g-1}^2(\mathcal O^{[2g-2]})-c_{g-2}(\mathcal O^{[2g-2]})c_{g}(\mathcal O^{[2g-2]}).
$$
The classes $c_i(\mathcal O^{[2g-2]})$ are given by (\ref{ci}) and we need to understand the sum
$$
\sum_{M, l(\sigma_1)=g-1,l(\sigma_2)=g-1}\sigma_M\sigma_1 \sigma_2
-
\sum_{M, l(\sigma_1)=g-2,l(\sigma_2)=g}\sigma_M\sigma_1 \sigma_2
$$
Every summand correspond to a class of $c_2(S)\times S$ (or $S\times c_2(S)$) or $\Delta$ in $S\times S$.
To distinguish the classes of diagonals, we need to require the factor $e^{g(\sigma_M,\sigma_1,\sigma_2)}$, which appears in triple intersection, is equal to one (cf. \cite[Proof of Lemma 2.13]{LS00}). It implies that three following conditions hold
\begin{enumerate}
\item $l(\sigma_1\sigma_2)=l(\sigma_M)$, in particular, $l(\sigma_1\sigma_2)=l(\sigma_1)+l(\sigma_2)-2$,
\item there is an element $i\in \{1,\ldots,2g-2\}$ such that $(\sigma_1\sigma_2)(i)=i$,
\item there is no element $i\in\{1,\ldots,2g-2\}$ such that $i=\sigma_1(i)=\sigma_2(i)$.
\end{enumerate}
Actually, the first two condition follows from the requirement $\sigma_M\sigma_1\sigma_2=id$.

The pairs $(\sigma_1,\sigma_2)$ with $l(\sigma_1)=i$, $l(\sigma_2)=j$ satisfying these requirements are precisely the set $\mathcal G(i,j)$ defined previously. Since $G(g-1,g-1)>G(g-2,g)$ by Lemma \ref{lp2}, we get that the class $\Omega$ contains diagonal. Therefore $\Omega$ viewed as a self-correspondence of $S$ does not act trivially on $H^{2,0}(S)$ and can not be supported on divisors of the form $D_i\times S$ and $S\times D_j$.
\end{proof}

\bibliographystyle{abbrv}

\end{document}